\theoremstyle{plain}
\newtheorem{theorem}{Theorem}[section]
\newtheorem*{theorem*}{Theorem}
\newtheorem{lemma}[theorem]{Lemma}
\theoremstyle{definition}
\newtheorem{definition}[theorem]{Definition}
\newtheorem{remark}[theorem]{Remark}
\newcommand{\enm}[1]{\ensuremath{#1}}          %
\newcommand{\cal}[1]{\mathcal{#1}}
\newcommand{\CC}{\enm{\mathbb{C}}}
\newcommand{\ZZ}{\enm{\mathbb{Z}}}
\newcommand{\PP}{\enm{\mathbb{P}}}
\newcommand{\Aa}{\enm{\cal{A}}}
\newcommand{\Bb}{\enm{\cal{B}}}
\newcommand{\Ee}{\enm{\cal{E}}}
\newcommand{\Ff}{\enm{\cal{F}}}
\newcommand{\Gg}{\enm{\cal{G}}}
\newcommand{\Oo}{\enm{\cal{O}}}
\newcommand{\Vv}{\enm{\cal{V}}}
\renewcommand{\phi}{\varphi}
\renewcommand{\theta}{\vartheta}
\renewcommand{\epsilon}{\varepsilon}
\renewcommand{\to}[1][]{\xrightarrow{\ #1\ }}
\newcommand{\old}[1]{}
\newcommand{\ppp}{{\mathbb{P}^1 \times \mathbb{P}^1 \times \mathbb{P}^1 }}
\begin{document}

\title[On Primitive Ulrich Bundles over a few projective varieties with Picard number two]{On Primitive Ulrich Bundles over a few projective varieties with Picard number two}

\author{F. Malaspina }

\address{Politecnico di Torino, Corso Duca degli Abruzzi 24, 10129 Torino, Italy}
\email{francesco.malaspina@polito.it}

\keywords{Ulrich bundles,  Beilinson spectral sequences}

\subjclass[2010]{ Primary: {14J60}; Secondary: {13C14, 14F05}}

\begin{abstract}
We introduce the notion of primitive Ulrich bundle in a smooth projective variety. We motivate this notion and give a cohomological characterization in the case of the degree $6$ flag threefold  and rational normal scrolls. Finally we propose a few open problems.
\end{abstract}

\maketitle
{\large Dedicated to the memory of Gianfranco Casnati}

\section{Introduction}
A locally free sheaf (or ``bundle'') $\Ee$ on a projective varity $X$ is ACM if it has no intermediate cohomology or if the module
$E$ of global sections of
$\Ee$ is a maximal Cohen-Macaulay module.
There has been increasing interest on the classification of  ACM bundles on various projective varieties, which is important in a sense that the ACM bundles are considered to give a measurement of complexity of the underlying space.  A special type of ACM sheaves, called the Ulrich sheaves, are the ones achieving the maximum possible minimal number of generators. These bundles
are characterized by the linearity of the minimal graded free resolution
over the polynomial ring of their module of global section.
Ulrich bundles, originally studied for computing Chow forms,
conjecturally exist over any variety (see \cite{ES}). 

 In this article we introduce the notion of primitive Ulrich  bundle as an Ulrich bundle which is extension of direct sums of Ulrich line bundles.
If we consider the varieties with a finite number of ACM bundles (see \cite{EH}) the projective spaces $\PP^n$, the hyperquadrics $\mathcal Q_n$, the Veronese surface $V_2$ and the cubic scroll $S(1,2)$ we notice that, except for the cases of $\mathcal Q_n$ with $n>2$, most of the Ulrich bundles (actually also of the ACM bundles) are primitive.\\
In \cite{FM} it has been showed that the quartic scroll surfaces $S(1,3)$ and $S(2,2)$ support at most one dimensional families of Ulrich bundles. An explicit classification it is given and we can notice that all the one dimensional families are made by primitive Ulrich bundles. Also on elliptic curves there are at most one dimensional families of 
(primitive) Ulrich bundles (see \cite{At}). On Segre varieties $\PP^{n_1}\times\dots\times\PP^{n_s}$, with $n_1\leq\dots\leq n_s$, it is possible to find arbitrary large dimensional  families of Ulrich bundles; when $n_1=1$,   arbitrary large families of primitive Ulrich bundles has been constructed in \cite{CMP}. See \cite{M} for Segre-Veronese varieties, \cite{ACM} for ruled surfaces and \cite{A} for Hirzebruch surfaces. Also on different type of threefold scrolls  arbitrary large families of primitive Ulrich bundles has been constructed (see \cite{FLP} and \cite{FF}). We may conclude that primitive bundles play an important role within Ulrich bundles and that they are worth investigating. Here we  give  cohomological characterizations of primitive Ulrich bundles on the degree $6$ flag threefold  and rational normal scrolls and we propose a few open problems

Here we summarize the structure of this article. In section \ref{sec2} we introduce the definition of primitive Ulrich bundles and several notions in derived category of coherent sheaves to understand the Beilinson spectral sequence. In section \ref{sec3} we deal with the case of the degree $6$ flag threefold. In section \ref{sec4} we study the cases of  rational normal scrolls of arbitrary dimension.
In section \ref{sec5} we discuss a few open problems.


\section{Preliminaries}\label{sec2}
Throughout the article our base field is the field of complex numbers $\CC$. We denote by $X$ a smooth projective variety over $\CC$ with a fixed ample line bundle $\Oo_X(1)$.

\begin{definition}
A coherent sheaf $\Ee$ on a projective variety $X$  is called {\it arithmetically Cohen-Macaulay} (for short, ACM) if it is locally Cohen-Macaulay and $H^i(\Ee(t))=0$ for all $t\in \ZZ$ and $i=1, \ldots, \dim (X)-1$.
\end{definition}



\begin{definition}
For an {\it initialized} coherent sheaf $\Ee$ on $X$, i.e. $h^0(\Ee(-1))=0$ but $h^0(\Ee)\ne 0$, we say that $\Ee$ is an {\it Ulrich sheaf} if it is ACM and $h^0(\Ee)=\deg (X)\mathrm{rank}(\Ee)$.
\end{definition}
\begin{remark} The following conditions are equivalent (see \cite{ES}):

\begin{itemize}
\item[$(i)$] $E$ is Ulrich.
\item[$(ii)$] $E$ admits a linear $\Oo_{\mathbb P^N}$ -resolution of the form:
$$0\to \Oo_{\mathbb P^N} (-N+n)^{a_{N-n}}\to\dots\to \Oo_{\mathbb P^N} (-1)^{a_{1}} \to\Oo_{\mathbb P^N}^{a_{0}}\to E \to 0. $$ 
where $a_0 = rank(E) deg(X)$ and $$a_i=\left( \begin{array}{c} N-n \\ i \end{array} \right)
a_0$$ for all $i.$

\item[$(iii)$] $H^i(E(-i))=0$  for $i>0$ and $H^i(E(-i-1))=0$ for $i< n.$
\end{itemize}
Moreover, since $X$ smooth, an Ulrich sheaf is always locally free.
\end{remark}
\begin{definition}
A vector bundle $E$ over $X$ is said primitive Ulrich  bundle if it is an Ulrich bundle which is extension of direct sums of Ulrich line bundles. So $E$ is a primitive Ulrich bundles if there exist $A=\oplus_{i=1}^s L_i$ and $B=\oplus_{j=1}^z L'_j$,  with $L_i,L'_j$  Ulrich line bundles, such that $E$ arises from the following exact sequence
$$0\to A\to E\to B\to 0.$$
In particular $s$ or $z$ be can be $0$ so also Ulrich line bundles can be consider as primitive Ulrich bundles.
\end{definition}
Let $\dim(X)\geq 2$. Eisenbud and Herzog (see \cite{EH}) classified the varieties with a finite number of ACM bundles: the projective spaces $\PP^n$, the hyperquadrics $\mathcal Q_n$, the Veronese surface $V_2$ i.e. $(\PP^2,\Oo_{\PP^2}(2))$ and the cubic scroll $S=S(1,2)$ i.e. the hyperplane section of $X=\mathbb P^1\times \mathbb P^2$. In the following table in the second column we list the ACM bundles up to twists, no twists of which are not Ulrich and in the third column the Ulrich bundles:
\begin{center}
\begin{tabular}{|l|c|r|}
\hline
&ACM & ULRICH\\
\hline
$\PP^n$ & &$\Oo$  \\
\hline
$\mathcal Q_n$ &$\Oo$ & $\Sigma_i$\\
\hline
$V_2$& $\Oo, \Oo(1)$ & $\Omega_{\mathbb P^2}(3)$\\
\hline
$S(1,2)$ & $\Oo, \Oo(1,0)$ & $\Oo(2,0), \Oo(0,1), E$\\
\hline
\end{tabular}
\end{center}
 where $\Sigma_*$ are the spinor bundles (we use an unified notation  for spinor bundles on $\mathcal Q_n$, where for even $n$, $i$ can take on the values $1,2$, while if $n$ is odd, i can be only $1$) and $E$ arises from the unique ($h^1(\Oo_S(-2,1))=1$) extension
  
$$0\to\Oo_S(0,1)\to E\to\Oo_S(2,0)\to 0.$$
We notice that, except for the cases of $\mathcal Q_n$ with $n>2$, most of the Ulrich bundles (actually also of the ACM bundles) are primitive.\\
Even in the other varieties with "few" ACM bundles, as was observed in the introduction, the larger families are made up of primitive Ulrich bundles.\\

 In the next section we will give some cohomological characterization of primitive Ulrich bundles. An useful tool will be Beilinson spectral sequences:

Given a smooth projective variety $X$, let $D^b(X)$ be the the bounded derived category of coherent sheaves on $X$. An object $E \in D^b(X)$ is called {\it exceptional} if $Ext^\bullet(E,E) = \mathbb C$.
A set of exceptional objects $\langle E_0, \ldots, E_n\rangle$ is called an {\it exceptional collection} if $Ext^\bullet(E_i,E_j) = 0$ for $i > j$. An exceptional collection is said to be {\it full} when $Ext^\bullet(E_i,A) = 0$ for all $i$ implies $A = 0$, or equivalently when $Ext^\bullet(A, E_i) = 0$ does the same.

\begin{definition}\label{def:mutation}
Let $E$ be an exceptional object in $D^b(X)$.
Then there are functors $\mathbb L_{E}$ and $\mathbb R_{E}$ fitting in distinguished triangles
$$
\mathbb L_{E}(T) 		\to	 Ext^\bullet(E,T) \otimes E 	\to	 T 		 \to	 \mathbb L_{E}(T)[1]
$$
$$
\mathbb R_{E}(T)[-1]	 \to 	 T 		 \to	 Ext^\bullet(T,E)^* \otimes E	 \to	 \mathbb R_{E}(T)	
$$
The functors $\mathbb L_{E}$ and $\mathbb R_{E}$ are called respectively the \emph{left} and \emph{right mutation functor}.
\end{definition}


The collections given by
\begin{align*}
E_i^{\vee} &= \mathbb L_{E_0} \mathbb L_{E_1} \dots \mathbb L_{E_{n-i-1}} E_{n-i};\\
^\vee E_i &= \mathbb R_{E_n} \mathbb R_{E_{n-1}} \dots \mathbb R_{E_{n-i+1}} E_{n-i},
\end{align*}
are again full and exceptional and are called the \emph{right} and \emph{left dual} collections. The dual collections are characterized by the following property; see \cite[Section 2.6]{GO}.
\begin{equation}\label{eq:dual characterization}
Ext^k(^\vee E_i, E_j) = Ext^k(E_i, E_j^\vee) = \left\{
\begin{array}{cc}
\mathbb C & \textrm{\quad if $i+j = n$ and $i = k $} \\
0 & \textrm{\quad otherwise}
\end{array}
\right.
\end{equation}

\begin{theorem}[Beilinson spectral sequence]\label{thm:Beilinson}
Let $X$ be a smooth projective variety and with a full exceptional collection $\langle E_0, \ldots, E_n\rangle$ of objects for $D^b(X)$. Then for any $A$ in $D^b(X)$ there is a spectral sequence
with the $E_1$-term
\[
E_1^{p,q} =\bigoplus_{r+s=q} Ext^{n+r}(E_{n-p}, A) \otimes \mathcal H^s(E_p^\vee )
\]
which is functorial in $A$ and converges to $\mathcal H^{p+q}(A)$.
\end{theorem}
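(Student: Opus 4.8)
The plan is to prove the Beilinson spectral sequence by realizing $A$ through its decomposition against the full exceptional collection and tracking the cohomology of the resolving terms. The theorem is a standard package, so I would present it as an instance of the general machinery attached to a full exceptional collection together with its dual, and the key input is the dual characterization in equation \eqref{eq:dual characterization}.

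First I would recall that, because $\langle E_0,\dots,E_n\rangle$ is full and exceptional, every object $A\in D^b(X)$ admits a canonical filtration (a ``Postnikov system'') whose graded pieces are of the form $V_p\otimes E_p$, where the $V_p$ are complexes of vector spaces. The point of the dual collection $\{E_p^\vee\}$ is to compute these coefficient complexes: by the orthogonality in \eqref{eq:dual characterization}, the correct pairing is against $E_{n-p}$, so that the natural candidate for the coefficient of the $p$-th term is $\op{Ext}^\bullet(E_{n-p},A)$. Concretely, one writes $A$ as a convolution of the objects $\op{Ext}^{n+r}(E_{n-p},A)\otimes E_p^\vee$ and I would verify that this convolution recovers $A$ by checking it against each $E_j$ and invoking fullness (if $\op{Ext}^\bullet(C,E_j)=0$ for all $j$ then $C=0$).

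Next I would produce the spectral sequence itself from this filtration. The filtration indexed by $p$ gives a spectral sequence whose $E_1$-page is the cohomology of the associated graded, namely the terms $\op{Ext}^{n+r}(E_{n-p},A)\otimes \Hh^s(E_p^\vee)$ with $r+s=q$; summing over $r+s=q$ yields exactly the stated $E_1^{p,q}$. Functoriality in $A$ is inherited from functoriality of the mutation functors $\mathbb L_E$ and $\mathbb R_E$ of Definition \ref{def:mutation}, since the dual collection and the coefficient spaces $\op{Ext}^\bullet(E_{n-p},A)$ are all built functorially from them. Convergence to $\Hh^{p+q}(A)$ follows because the filtration is finite (there are only $n+1$ steps) and exhausts $A$.

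The main obstacle, and the step I would spend the most care on, is pinning down the precise degree bookkeeping so that the differentials and the total degree $p+q$ come out as claimed; the shift by $n$ in $\op{Ext}^{n+r}$ and the splitting $r+s=q$ must be aligned so that the abutment is the cohomology sheaf $\Hh^{p+q}(A)$ rather than some shifted version. Getting the indexing conventions consistent between the left dual collection, the Postnikov filtration, and the hypercohomology grading is the genuinely delicate part; the existence and fullness of the dual collection, by contrast, I would simply cite from \cite[Section 2.6]{GO}. Once the identification of the associated graded with the claimed $E_1$-term is established, the remaining assertions are formal consequences of the standard spectral sequence of a filtered object.
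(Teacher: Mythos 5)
The paper does not actually prove this theorem: immediately after the statement it defers to \cite[Corollary 3.3.2]{RU}, \cite[Section 2.7.3]{GO} and \cite[Theorem 2.1.14]{BO}, so there is no in-text argument to compare against. Your sketch is the standard proof found in those sources (Postnikov system / convolution attached to the full exceptional collection and its right dual, then the spectral sequence of the resulting finite filtration), and its overall architecture is sound. Two points keep it from being a proof as written. First, the step ``verify that this convolution recovers $A$ by checking it against each $E_j$ and invoking fullness'' is not quite right as stated: having the same $\op{Ext}$-groups against every $E_j$ does not identify two objects of $D^b(X)$; fullness only kills an object all of whose $\op{Ext}$'s vanish, so you must first construct the Postnikov tower \emph{together with} a comparison morphism to $A$ (built from the canonical evaluation maps $\op{Ext}^\bullet(E_{n-p},A)\otimes E_{n-p}\to A$ and the mutation triangles of Definition \ref{def:mutation}) and then apply fullness to the cone of that morphism. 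Second, the degree bookkeeping — the shift by $n$ in $\op{Ext}^{n+r}$, the splitting $r+s=q$, and the identification of the abutment with $\mathcal H^{p+q}(A)$ via \eqref{eq:dual characterization} — is exactly where the content of the statement lies, and you flag it as delicate but do not carry it out. Since the paper itself treats the whole theorem as a citation, deferring these points to \cite{GO} or \cite{BO} is defensible, but as a self-contained proof the proposal is an outline rather than an argument.
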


The statement and proof of Theorem \ref{thm:Beilinson} can be found both in  \cite[Corollary 3.3.2]{RU}, in \cite[Section 2.7.3]{GO} and in \cite[Theorem 2.1.14]{BO}.


Let us assume next that the full exceptional collection  $\langle E_0, \ldots, E_n\rangle$ contains only pure objects of type $E_i=\mathcal E_i^*[-k_i]$ with $\mathcal E_i$ a vector bundle for each $i$, and moreover the right dual collection $\langle E_0^\vee, \ldots, E_n^\vee\rangle$ consists of coherent sheaves. Then the Beilinson spectral sequence is much simpler since
\[
E_1^{p,q}=Ext^{n+q}(E_{n-p}, A) \otimes E_p^\vee=H^{n+q+k_{n-p}}(\mathcal E_{n-p}\otimes A)\otimes E_p^\vee.
\]

Note however that the grading in this spectral sequence applied for the projective space is slightly different from the grading of the usual Beilison spectral sequence, due to the existence of shifts by $n$ in the index $p,q$. Indeed, the $E_1$-terms of the usual spectral sequence are $H^q(A(p))\otimes \Omega^{-p}(-p)$ which are zero for positive $p$. To restore the order, one needs to change slightly the gradings of the spectral sequence from Theorem \ref{thm:Beilinson}. If we replace, in the expression
\[
E_1^{u,v} = \mathrm{Ext}^{v}(E_{-u},A) \otimes E_{n+u}^\vee=
H^{v+k_{-u}}(\mathcal E_{-u}\otimes A) \otimes \mathcal F_{-u}
\]
$u=-n+p$ and $v=n+q$ so that the fourth quadrant is mapped to the second quadrant, we obtain the following version (see \cite{AHMP}) of the Beilinson spectral sequence:



\begin{theorem}\label{use}
Let $X$ be a smooth projective variety with a full exceptional collection
$\langle E_0, \ldots, E_n\rangle$
where $E_i=\mathcal E_i^*[-k_i]$ with each $\mathcal E_i$ a vector bundle and $(k_0, \ldots, k_n)\in \ZZ^{\oplus n+1}$ such that there exists a sequence $\langle F_n=\mathcal F_n, \ldots, F_0=\mathcal F_0\rangle$ of vector bundles satisfying
\begin{equation}\label{order}
\mathrm{Ext}^k(E_i,F_j)=H^{k+k_i}( \mathcal E_i\otimes \mathcal F_j) =  \left\{
\begin{array}{cc}
\mathbb C & \textrm{\quad if $i=j=k$} \\
0 & \textrm{\quad otherwise}
\end{array}
\right.
\end{equation}
i.e. the collection $\langle F_n, \ldots, F_0\rangle$ labelled in the reverse order is the right dual collection of $\langle E_0, \ldots, E_n\rangle$.
Then for any coherent sheaf $A$ on $X$ there is a spectral sequence in the square $-n\leq p\leq 0$, $0\leq q\leq n$  with the $E_1$-term
\[
E_1^{p,q} = \mathrm{Ext}^{q}(E_{-p},A) \otimes F_{-p}=
H^{q+k_{-p}}(\mathcal E_{-p}\otimes A) \otimes \mathcal F_{-p}
\]
which is functorial in $A$ and converges to
\begin{equation}
E_{\infty}^{p,q}= \left\{
\begin{array}{cc}
A & \textrm{\quad if $p+q=0$} \\
0 & \textrm{\quad otherwise.}
\end{array}
\right.
\end{equation}
\end{theorem}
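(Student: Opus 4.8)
The plan is to deduce the statement directly from the general Beilinson spectral sequence of Theorem \ref{thm:Beilinson} by first collapsing the $E_1$-page using the two hypotheses—purity of the $E_i$ and coherence of the right dual collection—and then performing the linear change of bidegree $u=-n+p$, $v=n+q$ already indicated in the text, which carries the fourth quadrant onto the second. Since Theorem \ref{thm:Beilinson} is available, no genuinely new homological input is needed; the work is entirely in the bookkeeping of the shifts $k_i$ and the indices.

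First I would record the two elementary identities in $D^b(X)$ that drive the computation: for a sheaf $B$ and an integer $k$ one has $Ext^m(B[-k],A)=Ext^{m+k}(B,A)$, and for a vector bundle $\mathcal E$ one has $Ext^j(\mathcal E^*,A)=H^j(\mathcal E\otimes A)$ because $\mathcal E^*$ is locally free. The hypothesis that the right dual collection $\langle E_0^\vee,\dots,E_n^\vee\rangle$ consists of coherent sheaves means each $E_p^\vee$ is concentrated in cohomological degree $0$, so $\mathcal H^s(E_p^\vee)=0$ for $s\neq 0$ and $\mathcal H^0(E_p^\vee)=E_p^\vee$. Feeding this into the $E_1$-term of Theorem \ref{thm:Beilinson} collapses the sum $\bigoplus_{r+s=q}$ to its single term $s=0$, $r=q$, giving $E_1^{p,q}=Ext^{n+q}(E_{n-p},A)\otimes E_p^\vee$. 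Writing $E_{n-p}=\mathcal E_{n-p}^*[-k_{n-p}]$ and applying the two identities turns this into $H^{n+q+k_{n-p}}(\mathcal E_{n-p}\otimes A)\otimes E_p^\vee$, exactly the simplified page displayed just before the theorem.

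Next I would identify $E_p^\vee$ with the prescribed bundles $\mathcal F$. Condition \eqref{order}, rewritten through the same two identities as $Ext^k(E_i,F_j)=\mathbb C$ precisely when $i=j=k$, becomes—after the substitution $F_j=E_{n-j}^\vee$—exactly the defining property \eqref{eq:dual characterization} of the right dual collection; hence $E_p^\vee=F_{n-p}=\mathcal F_{n-p}$ for every $p$. Substituting $u=-n+p$ and $v=n+q$ then carries the fourth-quadrant region where the page can be nonzero to the square $-n\le u\le 0$, $0\le v\le n$; under this substitution $E_{n-p}=E_{-u}$ and $E_p^\vee=\mathcal F_{-u}$, so the page becomes $E_1^{u,v}=Ext^{v}(E_{-u},A)\otimes\mathcal F_{-u}=H^{v+k_{-u}}(\mathcal E_{-u}\otimes A)\otimes\mathcal F_{-u}$. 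Relabelling $(u,v)$ as $(p,q)$ yields the asserted $E_1$-page.

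Finally, for convergence I would note that the total degree is preserved, $u+v=(-n+p)+(n+q)=p+q$, so the new abutment in total degree $u+v$ equals the old abutment $\mathcal H^{p+q}(A)$ of Theorem \ref{thm:Beilinson}. Because $A$ is an honest coherent sheaf it is concentrated in degree $0$, whence $\mathcal H^{p+q}(A)=A$ when $p+q=0$ and $0$ otherwise, which is the stated $E_\infty$-term. The one point requiring care—and the step I expect to be the main obstacle—is the index reflection $p\mapsto n-p$ together with the correct placement of the shifts $k_i$: one must check that the reflection built into the right dual collection of Theorem \ref{thm:Beilinson} matches the reverse labelling $\langle F_n,\dots,F_0\rangle$ used here, so that no off-by-one error corrupts either the identification $E_p^\vee=\mathcal F_{-u}$ or the cohomological degree in $H^{v+k_{-u}}(\mathcal E_{-u}\otimes A)$.
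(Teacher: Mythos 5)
Your proposal is correct and follows essentially the same route as the paper, which derives Theorem \ref{use} from Theorem \ref{thm:Beilinson} in the discussion immediately preceding it: the purity of the objects $E_i=\mathcal E_i^*[-k_i]$ and the sheaf-ness of the dual collection collapse the sum $\bigoplus_{r+s=q}$ to the single term $s=0$, the condition \eqref{order} matches the characterization \eqref{eq:dual characterization} so that $F_j=E_{n-j}^\vee$, and the substitution $u=-n+p$, $v=n+q$ produces the stated square and abutment. Your bookkeeping of the shifts $k_i$ and the index reflection $p\mapsto n-p$ agrees with the paper's.
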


\section{The flag variety $F(0,1,2)$}\label{sec3}
In this section we give a cohomological characterization of primitive Ulrich bundles over the flag variety $F(0,1,2)$
Let $F\subseteq\mathbb P^7$ be the del Pezzo threefold  of degree $6$ and Picard number two. Let us call $h_1, h_2$ the generators of the Picard group. Let us consider $F$ as an hyperplane section of $\mathbb P^2\times\mathbb P^2$ with the two natural projections $p_i:F\subset\mathbb P^2\times\mathbb P^2\to\mathbb P^2$ and the following rank two vector bundles:
$$
\Gg_1=p_1^*\Omega_{\mathbb P^2}^1(h_1)=p_1^*[\Omega_{\mathbb P^2}^1(1)]\qquad \Gg_2=p_2^*\Omega_{\mathbb P^2}^1(h_2)=p_2^*[\Omega_{\mathbb P^2}^1(1)],
$$
We write $\Oo_F(a,b)$ instead of $\Oo_F(ah_1+bh_2)$. We have the exact sequences
\begin{equation}\label{a1}0 \to \Oo_F(-2,0) \to
 \Oo_F^{3}(-1,0)\to\Gg_1\to 0
 \end{equation}

 \begin{equation}\label{a2}0 \to \Oo_F(0,-2) \to
 \Oo_F^{3}(0,-1)\to\Gg_2\to 0
 \end{equation}
 
 \begin{equation}\label{a3}0 \to\Gg_1\to \Oo_F^3 \to
 \Oo_F(1,0)\to 0
 \end{equation}
 
 \begin{equation}\label{a4}0 \to\Gg_2\to \Oo_F^3 \to
 \Oo_F(0,1)\to 0
 \end{equation}
 All the rank two ACM bundles has been classified in \cite{CFM}.\\
We may consider the full exceptional collection
\begin{equation}\label{col11} \{E_5=\Oo_F(-1,-1)[-2], E_4 = \Oo_F(-1,0)[-2], E_3 = \Oo_F(0,-1)[-1],\end{equation}
$$E_2 = \Gg_2[-1] , E_1 = \Gg_1, E_0 = \Oo_F\}$$

and
\begin{equation}\label{col22} \{F_0 = \Oo_F, F_1 = \Oo_F(-1,0), F_2 = \Oo_F(0,-1), F_3 =  \Oo_F(0,-2), F_4 = \Oo_F(-2,0), F_5 = \Oo_F(-1,-1)\}
\end{equation}

\begin{theorem}\label{volon44}
Let $\Vv$ be an Ulrich bundle on $F$ such that $h^2(\Vv(-2,-2)\otimes\Gg_1\otimes\Gg_2)=0$.  Then $\Vv$ is primitive and arises from an exact sequence of the form:
\begin{equation}\label{res22}
0\to\Oo_F(0,2)^{\oplus a}\to\Vv\to\Oo_F(2,0)^{\oplus b}\to 0.\end{equation}

\end{theorem}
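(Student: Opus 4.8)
The plan is to run the Beilinson spectral sequence of Theorem \ref{use} on $A=\Vv$, using the full exceptional collection (\ref{col11}) and its right dual (\ref{col22}), and to show that the stated hypothesis forces the $E_1$-page to collapse onto the two-step shape recorded in (\ref{res22}). Since the $E_1$-terms have the form $H^{q+k_{j}}(\mathcal E_{j}\otimes\Vv)\otimes\mathcal F_{j}$, the entire argument reduces to computing the six families of twisted cohomology groups attached to the bundles occurring in (\ref{col11}), namely $H^{\bullet}(\Vv)$, $H^{\bullet}(\Vv\otimes\Gg_1^{*})$, $H^{\bullet}(\Vv\otimes\Gg_2^{*})$, $H^{\bullet}(\Vv(0,1))$, $H^{\bullet}(\Vv(1,0))$ and $H^{\bullet}(\Vv(1,1))$, each carrying the cohomological shift dictated by the exponents $k_{j}$.

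First I would collect the vanishings that are automatic for an Ulrich bundle. Because $\Oo_F(1)=\Oo_F(1,1)$ and $\dim F=3$, one has $H^{i}(\Vv(-t,-t))=0$ for every $i$ and $1\le t\le 3$, together with $H^{i}(\Vv)=0$ for $i>0$ and $h^{0}(\Vv)=6\,\mathrm{rank}(\Vv)$. I would also exploit the self-duality of the Ulrich condition on $F$: since $\omega_F=\Oo_F(-2,-2)$ and the hyperplane class is $(1,1)$, the twisted dual $\Vv^{*}(2,2)$ is again Ulrich, which lets me transport these vanishings across Serre duality to the off-diagonal and non-effective twists.

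The heart of the proof is the evaluation of the six cohomology families. For this I would feed the Ulrich vanishings into the four structural sequences (\ref{a1})--(\ref{a4}), which express the $\Gg_{i}$-twists and the twists $\Vv(0,1),\Vv(1,0),\Vv(1,1)$ in terms of diagonal twists, and use Serre duality for the remaining ones. The hypothesis enters at exactly one place. Tensoring the Koszul-type complex obtained from (\ref{a3}) and (\ref{a4}), which resolves $\Oo_F(1,1)$ and has $\Gg_1\otimes\Gg_2$ as its leading term, with $\Vv(-2,-2)$ produces the Ulrich-acyclic sheaf $\Vv(-1,-1)$ at one end, while $H^{2}(\Vv(-2,-2)\otimes\Gg_1\otimes\Gg_2)$ is the only contribution not already forced to vanish; by Serre duality
\[
h^{2}\!\left(\Vv(-2,-2)\otimes\Gg_1\otimes\Gg_2\right)=h^{1}\!\left(\Vv^{*}\otimes\Gg_1^{*}\otimes\Gg_2^{*}\right),
\]
so the assumption $h^{2}(\Vv(-2,-2)\otimes\Gg_1\otimes\Gg_2)=0$ is precisely what kills the one spectral-sequence entry that the Ulrich property alone does not eliminate.

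With that entry gone, only two graded pieces of the abutment survive, and they are copies of the two Ulrich line bundles $\Oo_F(0,2)$ and $\Oo_F(2,0)$; one checks directly that $h^{0}(\Oo_F(2,0))=h^{0}(\Oo_F(0,2))=6$ with the correct initialization, and these are the $\omega_F^{-1}$-twists of the dual-collection members $\Oo_F(-2,0)$ and $\Oo_F(0,-2)$. Their relative position in the spectral sequence yields the extension $0\to\Oo_F(0,2)^{\oplus a}\to\Vv\to\Oo_F(2,0)^{\oplus b}\to0$, proving simultaneously that $\Vv$ is primitive and that it admits the resolution (\ref{res22}). I expect the delicate point to be the cohomological bookkeeping of the mixed terms $H^{\bullet}(\Vv\otimes\Gg_{i}^{*})$ and of the top twist $H^{\bullet}(\Vv(1,1))$: one must verify that every connecting map that ought to be an isomorphism is one, and that no higher differential reintroduces a surviving term, and it is here, and only here, that the hypothesis must be invoked with care.
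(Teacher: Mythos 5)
Your overall strategy is the paper's strategy: run the Beilinson spectral sequence of Theorem \ref{use} with the collections (\ref{col11}) and (\ref{col22}), use the Ulrich vanishings on diagonal twists together with the structural sequences (\ref{a1})--(\ref{a4}) to clear the table, let the hypothesis $h^2(\Vv(-2,-2)\otimes\Gg_1\otimes\Gg_2)=0$ kill the one entry that Ulrichness alone does not, and read off the two-term extension. You also correctly locate where the hypothesis enters (the paper uses the sequence $0\to\Aa\otimes\Gg_1\otimes\Gg_2\to\Aa^{\oplus 3}\otimes\Gg_2\to\Aa(1,0)\otimes\Gg_2\to 0$ with $\Aa=\Vv(-2,-2)$ to deduce $H^2(\Aa\otimes\Gg_2)=0$, and symmetrically for $\Gg_1$).

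There is, however, one concrete error of setup. You announce the spectral sequence for $A=\Vv$, and accordingly list the six cohomology families as $H^{\bullet}(\Vv)$, $H^{\bullet}(\Vv\otimes\Gg_i^{*})$, $H^{\bullet}(\Vv(0,1))$, $H^{\bullet}(\Vv(1,0))$, $H^{\bullet}(\Vv(1,1))$. None of these vanish for an Ulrich bundle --- already $h^0(\Vv)=6\,\mathrm{rank}(\Vv)\neq 0$, and the positive twists only have more sections --- so with this choice of $A$ the $E_1$-page does not collapse to a two-term complex at all. The paper instead applies the spectral sequence to $\Aa:=\Vv(-2,-2)$, precisely so that the relevant twists are $\Vv(-1,-1),\dots,\Vv(-3,-3)$ and their $\Gg_i$-companions, where the Ulrich vanishings of \cite[Proposition 2.1]{ES} bite; the conclusion is then twisted back by $\Oo_F(2,2)$. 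Your later steps (the use of the hypothesis on $\Vv(-2,-2)\otimes\Gg_1\otimes\Gg_2$, and the remark that the surviving pieces are the $\omega_F^{-1}$-twists of $\Oo_F(-2,0)$ and $\Oo_F(0,-2)$) show you are implicitly computing with $\Vv(-2,-2)$, but the list of groups you propose to compute is the wrong one and must be replaced throughout. Beyond that, the middle of the argument --- deriving $H^1(\Aa\otimes\Gg_i)=H^2(\Aa\otimes\Gg_i(1,0))=H^1(\Aa(-1,0))=H^1(\Aa(0,-1))=0$ etc.\ from (\ref{a1})--(\ref{a4}) --- is only gestured at and explicitly deferred in your last paragraph; this bookkeeping is the actual content of the paper's proof and would need to be carried out rather than postulated.
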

\begin{proof}
We consider the Beilinson type spectral sequence associated to $\Aa:=\Vv(-2,-2)$ and identify the members of the graded sheaf associated to the induced filtration as the sheaves mentioned in the statement. We  consider the full exceptional collection $\Ee_{\bullet}$ and right dual collection $\Ff_{\bullet}$ in (\ref{col11}) and (\ref{col22}).

 We construct a Beilinson complex, quasi-isomorphic to $\Aa$, by calculating $H^{i+k_j}(\Aa\otimes \Ee_j)\otimes \Ff_j$ with  $i,j \in \{0, \ldots, 6\}$ to get the following table:

 \begin{center}\begin{tabular}{|c|c|c|c|c|c|c|c|c|c|c|}
\hline
 $\Oo_F(-1,-1)$ & $\Oo_F(-2,0)$ & $\Oo_F(0,-2)$ & $\Oo_F(0,-1)$ & $\Oo_F(-1,0)$ & $\Oo_F$ \\
 \hline
 \hline
$H^3$	&	$H^3$	&	$*$		&	$*$		&	$*$		&	$*$	\\
$H^2$	&	$H^2$	&	$H^3$	&	$H^3$	&	$*$		&	$*$	\\
$H^1$	& 	$H^1$	&	$H^2$	&	$H^2$	&	$H^3$	&	$H^3$	\\
$H^0$	& 	$H^0$	&	$H^1$	&	$H^1$	&	$H^2$	&	$H^2$	\\
$*$		&	$*$ 	 	&	$H^0$	&	$H^0$	& 	$H^1$	& 	$H^1$	\\
$*$		&	$*$		&	$*$		&	$*$		&	$H^0$	& 	$H^0$ \\
\hline
$\Oo_F(-1,-1)$		& $\Oo_F(-1,0)$	 & $\Oo_F(0,-1)$	 & $\Gg_2$		& $\Gg_1$		&$\Oo_F$\\
\hline
\end{tabular}
\end{center}

We assume due to \cite[Proposition 2.1]{ES} that
$$H^i(\Aa(-j,-j))=0 \text{ for all $i$ and $-1\le j \le 1$}.$$
From the exact sequence
$$0\to\Aa\otimes\Gg_1\to\Aa^{\oplus 3}\to\Aa(1,0)\to 0,$$
since,  by (\ref{a1}), $H^3(\Aa\otimes\Gg_1)=0$ we get $H^2(\Aa(1,0))=0$ and since  $H^0(\Aa(1,0))=0$ we get $H^1(\Aa\otimes\Gg_1)=0$. In a similar way we get $H^2(\Aa(0,1))=H^1(\Aa\otimes\Gg_2)=0$. 
If we twist the above sequence by $\Oo_F(0,1)$ we get also $H^2(\Aa\otimes\Gg_1(0,1))=0$ and an analog way $H^2(\Aa\otimes\Gg_2(1,0))=0$.\\
From the exact sequence
$$0\to\Aa(-1,0)\to\Aa^{\oplus 3}\to\Aa\otimes\Gg_1(1,0)\to 0,$$
since, by (\ref{a3}), $H^0(\Aa\otimes\Gg_1(1,0))=0$ we get $H^1(\Aa(-1,0))=0$ and since $H^3(\Aa(-1,0))=0$ we get $H^2(\Aa\otimes\Gg_1(1,0))=0$.  In a similar way we get $H^1(\Aa(0,-1))=H^2(\Aa\otimes\Gg_2(0,1))=0$.

So the table become
 \begin{center}\begin{tabular}{|c|c|c|c|c|c|c|c|c|c|c|}
\hline
 $\Oo_F(-1,-1)$ & $\Oo_F(-2,0)$ & $\Oo_F(0,-2)$ & $\Oo_F(0,-1)$ & $\Oo_F(-1,0)$ & $\Oo_F$ \\
 \hline
 \hline
$0$	&	$0$	&	$*$		&	$*$		&	$*$		&	$*$	\\
$0$	&	$H^2$	&	$0$	&	$0$	&	$*$		&	$*$	\\
$0$	& 	$0$	&	$H^2$	&	$H^2$	&	$0$	&	$0$	\\
$0$	& 	$0$	&	$0$	&	$0$	&	$H^2$	&	$0$	\\
$*$		&	$*$ 	 	&	$0$	&	$0$	& 	$0$	& 	$0$	\\
$*$		&	$*$		&	$*$		&	$*$		&	$0$	& 	$0$ \\
\hline
$\Oo_F(-1,-1)$		& $\Oo_F(-1,0)$	 & $\Oo_F(0,-1)$	 & $\Gg_2$		& $\Gg_1$		&$\Oo_F$\\
\hline
\end{tabular}
\end{center}
From the exact sequence
$$0\to\Aa\otimes\Gg_1\otimes\Gg_2\to\Aa^{\oplus 3}\otimes\Gg_2\to\Aa(1,0)\otimes\Gg_2\to 0,$$
since $H^2(\Aa\otimes\Gg_1\otimes\Gg_2)=0$ and $H^2(\Aa\otimes\Gg_2(1,0))=0$ we get $H^2(\Aa\otimes\Gg_2)=0$. In a similar way we get $H^2(\Aa\otimes\Gg_1)=0$. \\
So the table become
 \begin{center}\begin{tabular}{|c|c|c|c|c|c|c|c|c|c|c|}
\hline
 $\Oo_F(-1,-1)$ & $\Oo_F(-2,0)$ & $\Oo_F(0,-2)$ & $\Oo_F(0,-1)$ & $\Oo_F(-1,0)$ & $\Oo_F$ \\
 \hline
 \hline
$0$	&	$0$	&	$*$		&	$*$		&	$*$		&	$*$	\\
$0$	&	$a$	&	$0$	&	$0$	&	$*$		&	$*$	\\
$0$	& 	$0$	&	$b$	&	$0$	&	$0$	&	$0$	\\
$0$	& 	$0$	&	$0$	&	$0$	&	$0$	&	$0$	\\
$*$		&	$*$ 	 	&	$0$	&	$0$	& 	$0$	& 	$0$	\\
$*$		&	$*$		&	$*$		&	$*$		&	$0$	& 	$0$ \\
\hline
$\Oo_F(-1,-1)$		& $\Oo_F(-1,0)$	 & $\Oo_F(0,-1)$	 & $\Gg_2$		& $\Gg_1$		&$\Oo_F$\\
\hline
\end{tabular}
\end{center}
where $a=h^2(\Aa(-1,0))$ and $b=h^2(\Aa(0,-1))$. Hence we obtain
$$0\to\Oo_F(-2,0)^{\oplus a}\to \Aa\to\Oo_F(0,-2)^{\oplus b}\to 0.$$
 So, twisting by $\Oo_F(2,2)$ we get the claimed extension.
\end{proof}

\section{Rational normal scrolls}\label{sec4}

 Let $S=S(a_0, \ldots, a_n)$ be a smooth rational normal scroll, the image of $\PP (\Ee)$ via the morphism defined by $\Oo_{\PP (\Ee)}(1)$, where $\Ee \cong \oplus_{i=0}^n \Oo_{\PP^1}(a_i)$ is a vector bundle of rank $n+1$ on $\PP^1$ with $0< a_0 \le \ldots \le a_n$. Letting $\pi : \PP (\Ee) \rightarrow \PP^1$ be the projection, we may denote by $H$ and $F$, the hyperplane section corresponding to $\Oo_{\PP(\Ee)}(1)$ and the fibre corresponding to $\pi^*\Oo_{\PP^1}(1)$, respectively. Then we have $Pic (S)\cong \ZZ\langle H,F\rangle$ and $\omega_S \cong \Oo_S(-(n+1)H+(c-2)F)$, where $c:=\sum_{i=0}^n a_i$ is the degree of $S$. We will simply denote $\Oo_S(aH+bF)$ by $\Oo_S(a+b,a)$, in particular, $\Oo_S(F)=\Oo_S(1,0)$. From now on we fix an ample line bundle on $S$ to be $\Oo_S(H)=\Oo_S(1,1)$.

Recall the dual of the relative Euler exact sequence of $S$:
\begin{equation}\label{eq1}
0\to \Omega_{S|\PP^1}^1 (1,1) \to \Bb:=\oplus_{i=0}^{n}\Oo_S(a_i,0) \to \Oo_S(1,1) \to 0,
\end{equation}
and so we have $\omega_{S|\PP^1} \cong \Oo_S(-(n+1)H+cF)\cong \Oo_S(c-n-1, -n-1)$. The long exact sequence of exterior powers associated to (\ref{eq1}) is

\begin{equation}\label{eqq1}
\begin{split}
0\to &\Oo_S(c-n,-n) \to \wedge^n \Bb(-n+1, -n+1) \stackrel{d_{n-1}}{\to}\\ &\wedge^{n-1}\Bb(-n+2, -n+2) \stackrel{d_{n-2}}{\to} \cdots \stackrel{d_1}{\to} \Bb \to \Oo_S(1,1) \to 0.
\end{split}
\end{equation}
Now (\ref{eqq1}) splits into
\begin{equation}\label{eqqq1}
0\to \Omega_{S|\PP^1}^i (i,i)\to \wedge^i \Bb \to \Omega_{S|\PP^1}^{i-1}(i,i) \to 0
\end{equation}
for each $i=1,\ldots, n$, and we have $\mathrm{Im}(d_i\otimes \Oo_S(i-1,i-1))\cong \Omega_{S|\PP^1}^{i}(i,i)\subset \wedge^i \Bb$.

Now, thanks the above sequences, we introduce suitable full exceptional collections that we will use in the next Theorem (see \cite[Example 4.6]{AHMP}):

\begin{align*}
&\Ee_{2n+1}=\Oo_S(-n,-n)[-n],~ \Ee_{2n}=\Oo_S(-n+1,-n)[-n], \\
&\Ee_{2n-1}=\Oo_S(-n+1,-n+1)[-n+1],~\Ee_{2n-2}=\Oo_S(-n+2,-n+1)[-n+1],\dots , \\
&~ \Ee_3=\Oo_S(-1,-1)[-1], ~ \Ee_2=\Oo_S(0,-1)[-1], \Ee_1=\Oo_S(-1,0)~,~ \Ee_0=\Oo_S.
\end{align*}

and the right dual collection 
\begin{align*}
&\Ff_{2n+1}=\Oo_S(c-3,-1)~,~ \Ff_{2n}=\Oo_S(c-2,-1), \\
&\Ff_{2n-1}=\Omega_{S|\PP^1}^{n-1}(n-3,n-1)~,~ \Ff_{2n-2}=\Omega_{S|\PP^1}^{n-1}(n-2,n-1), \dots ,\\
&\Ff_3=\Omega_{S|\PP^1}^1(-1,1)~,~ \Ff_2=\Omega_{S|\PP^1}^1(0,1)~,~ \Ff_1=\Oo_S(-1,0)~,~ \Ff_0=\Oo_S.
\end{align*}

\begin{theorem}\label{volon}
Let $\Vv$ be an Ulrich vector bundle on $S$ such that $h^i(\Vv(-i,-i-1))=0$ for any $i=1,\dots n-1$.  Then $\Vv$ is primitive and arises from an exact sequence of the form:
\begin{equation}\label{res23}
0\to\Oo_S(0,1)^{\oplus a}\to\Vv\to\Oo_S(c-1,0)^{\oplus b}\to 0.\end{equation}

\end{theorem}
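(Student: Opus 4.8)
The plan is to run the same machine as in Theorem \ref{volon44}, now over $S$ and with the collections $\Ee_\bullet,\Ff_\bullet$ displayed above. I set $\Aa:=\Vv(-1,-1)=\Vv(-H)$ and feed it to the Beilinson-type spectral sequence of Theorem \ref{use}. As every $\Ee_j$ is a shifted line bundle, the $j$-th column of the $E_1$-page computes $H^\bullet$ of a line-bundle twist of $\Vv$, with coefficients in $\Ff_j$. The columns fall into two families: the \emph{diagonal} ones, coming from $\Ee_0$ and the odd terms $\Ee_{2m+1}$ ($1\le m\le n$), which yield the pure powers $\Vv(-p,-p)=\Vv(-pH)$ with $1\le p\le n+1$; and the \emph{off-diagonal} ones, coming from the even terms $\Ee_{2m}$ ($1\le m\le n$) and from $\Ee_1$, which yield $\Vv(-m,-m-1)$ and $\Vv(-2,-1)$ respectively.

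The twist $\Aa=\Vv(-H)$ is chosen exactly so that the diagonal columns sweep out $\Vv(-pH)$ for $1\le p\le \dim S=n+1$. By the Ulrich vanishing recorded in \cite[Proposition 2.1]{ES} --- invoked in precisely the same way in the proof of Theorem \ref{volon44} --- one has $H^\bullet(\Vv(-pH))=0$ for all these $p$, so every diagonal column vanishes identically and the whole table is supported on the off-diagonal columns.

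To treat the latter I would use the relative Euler sequence (\ref{eq1}) and its exterior powers (\ref{eqq1}), (\ref{eqqq1}), whose middle terms $\wedge^{i}\Bb$ are sums of fibre line bundles. Tensoring these with the appropriate twist of $\Vv$ and feeding in the diagonal vanishing expresses the cohomology of the off-diagonal columns through that of $\Vv$ along a fibre: restricting to a fibre $F_0\cong\PP^{n}$ and using $H^\bullet(\Vv(-pH))=0$ gives $H^{i}(\Vv(-m,-m-1))\cong H^{i}(\Vv|_{F_0}(-m-1))$ and $H^{i}(\Vv(-2,-1))\cong H^{i-1}(\Vv|_{F_0}(-1))$. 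This collapses each off-diagonal column onto a single surviving degree, and the hypothesis $h^{m}(\Vv(-m,-m-1))=0$ for $1\le m\le n-1$ annihilates all of the intermediate ones. What remains is the column of $\Ff_{2n}=\Oo_S(c-2,-1)$, carrying $b:=h^{n}(\Vv(-n,-n-1))$, and the column of $\Ff_1=\Oo_S(-1,0)$, carrying $a:=h^{1}(\Vv(-2,-1))$. With only two surviving terms the spectral sequence degenerates, and its filtration on $\Aa$ is the extension
\[
0\to\Oo_S(-1,0)^{\oplus a}\to\Aa\to\Oo_S(c-2,-1)^{\oplus b}\to 0 .
\]
Twisting by $\Oo_S(1,1)=\Oo_S(H)$ sends the sub to $\Oo_S(0,1)$, the quotient to $\Oo_S(c-1,0)$ and $\Aa$ back to $\Vv$, which is (\ref{res23}); as $\Oo_S(0,1)$ and $\Oo_S(c-1,0)$ are Ulrich line bundles, this exhibits $\Vv$ as primitive.

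I expect the main obstacle to be showing that in each off-diagonal column only the single expected degree survives. The fibre reduction above leaves the groups $H^{i}(\Vv|_{F_0}(-m-1))$, and one must rule out every $i$ other than the active one; for arbitrary $n$ this forces a careful descent through the filtration of $\wedge^{i}\Bb$ by $\Omega_{S|\PP^1}^{i}$, in contrast to the threefold case, where the four sequences (\ref{a1})--(\ref{a4}) already suffice. Keeping track of which cohomological degree is active in each twist, and matching the range $1\le m\le n-1$ of the hypothesis to this bookkeeping so that precisely the two distinguished columns remain, is the delicate point.
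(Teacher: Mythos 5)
Your setup coincides with the paper's: same twist $\Aa=\Vv(-1,-1)$, same collections $\Ee_\bullet,\Ff_\bullet$, same split of the columns into the ``diagonal'' ones computing $H^\bullet(\Vv(-pH))$ for $1\le p\le n+1$ (killed by the Ulrich vanishing of \cite[Proposition 2.1]{ES}) and the ``off-diagonal'' ones computing $H^\bullet(\Vv(-m,-m-1))$ and $H^\bullet(\Vv(-2,-1))$, and the same identification of the two survivors $a=h^1(\Vv(-2,-1))$ and $b=h^n(\Vv(-n,-n-1))$ followed by the twist by $\Oo_S(1,1)$. Up to that point the bookkeeping is correct.

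The gap is exactly the step you flag as ``the delicate point'': showing that each off-diagonal column is concentrated in the single active degree, so that the hypothesis $h^i(\Vv(-i,-i-1))=0$, $i=1,\dots,n-1$, suffices to empty all but two columns. Your proposed route --- the restriction sequence $0\to\Vv(-(m+1)H)\to\Vv(-(m+1)H+F)\to\Vv|_{F_0}(-m-1)\to 0$ --- does give the isomorphisms $H^i(\Vv(-m,-m-1))\cong H^i(\Vv|_{F_0}(-m-1))$ you state, but it only trades the problem for the cohomology of the restriction $\Vv|_{F_0}$ on $\PP^n$, about which nothing is known a priori; an Ulrich bundle on $S$ need not restrict to anything with controlled intermediate cohomology on a fibre, so this does not close the argument, and the descent through $\wedge^i\Bb$ you sketch is not carried out. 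The paper closes this step by an entirely different and purely formal mechanism: since the spectral sequence converges to $\Aa$ placed on the antidiagonal $p+q=0$, and the positions of the potentially nonzero $E_1$-entries together with the degrees of the differentials leave no way for the off-antidiagonal entries to be killed later, they must already vanish at $E_1$; this is the content of the citation of \cite[Example 4.6]{AHMP} (``all the entries off the diagonal must be zero''). That convergence argument is the missing idea: it supplies all the extra vanishings for free, after which only the $n-1$ entries addressed by the hypothesis, plus $a$ and $b$, remain. As written, your proof is incomplete without either that formal argument or a genuine substitute for it.
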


\begin{proof}
We consider the Beilinson type spectral sequence associated to $\Aa:=\Vv(-1,-1)$ and consider the full exceptional collection $\Ee_{\bullet}$ and right dual collection $\Ff_{\bullet}$ above. We construct a Beilinson complex, quasi-isomorphic to $\Aa$, by calculating $H^{i+k_j}(\Aa\otimes \Ee_j)\otimes \Ff_j$ with  $i,j \in \{1, \ldots, 2n+2\}$ to get the following table. Here we use several vanishing in the intermediate cohomology of $\Aa, \Aa(-1,-1),\cdots,  \Aa(-n,-n)$ together with vanishing of cohomology $H^0$ and $H^{n+1}$:

\begin{center}\begin{tabular}{|c|c|c|c|c|c|c|c|}
\hline
$\Ff_{2n+1}$ &$\Ff_{2n}$&$\Ff_{2n-1}$& $\Ff_{2n-2}$& \dots
&$\Ff_2$&$\Ff_1$ &$\Ff_0$\\
\hline
\hline
$0$        &$0$        &$0$         &	 $0$	 	& \dots		 	& $0$		 	& $0$			& $0$	\\
$0$        &$H^{n}$    &$0$		    &	 $0$	& \dots		 	& $0$		 	& $0$			& $0$	\\
$0$        &$H^{n-1}$  &$0$	        &	 $H^n$	 	& \dots		    & $0$		    & $0$			& $0$	\\
$0$        &$H^{n-2}$  &$0$	        &	 $H^{n-1}$	& \dots		 	& $0$		 	& $0$			& $0$	\\
\vdots     &\vdots     &\vdots	    &	 \vdots	    & \vdots		& \vdots		& 	\vdots		& \vdots	\\
$0$        &$0$        &$0$		    &	 $H^1$	 	& \dots	 	    & $H^n$		 	& $0$		& $0$	\\
$0$        &$0$        &$0$		    &	 $0$	 	& \dots	 	    & $H^{n-1}$		 	& $H^n$			& $0$	\\
$0$        &$0$        &$0$		    &	 $0$	 	& \dots	 	    & $H^{n-2}$		 	& $H^{n-1}$		& $0$	\\
\vdots     &\vdots     &\vdots	    &	 \vdots	    & \vdots		& \vdots		& 	\vdots		& \vdots	\\
$0$        &$0$        &$0$		    &	 $0$	 	& \dots		 	& $H^1$		 	& $H^2$			& $0$	\\
$0$        &$0$        &$0$		    &	 $0$	 	& \dots		 	& $0$		 	& $H^1$			& $0$	\\
$0$        &$0$        &$0$		    &	 $0$	 	& \dots		 	& $0$		 	& $0$			& $0$	\\

\hline
\hline
$\Ee_{2n+1}$ &$\Ee_{2n}$ &$\Ee_{2n-1}$& $\Ee_{2n-2}$ & \dots
 &$\Ee_2$& $\Ee_1$ &$\Ee_0$\\
 \hline
\end{tabular}
\end{center}

\noindent By \cite[Example 4.6]{AHMP}, we may conclude that all the entries off the diagonal must be zero and thus we get

\begin{center}\begin{tabular}{|c|c|c|c|c|c|c|c|}
\hline
$\Ff_{2n+1}$ &$\Ff_{2n}$&$\Ff_{2n-1}$& $\Ff_{2n-2}$& \dots
&$\Ff_2$&$\Ff_1$ &$\Ff_0$\\
\hline
\hline

$0$        &$0$        &$0$         &	 $0$	 	& \dots		 	& $0$		 	& $0$			& $0$	\\
$0$        &$H^n$    &$0$		    &	 $0$	& \dots		 	& $0$		 	& $0$			& $0$	\\
$0$        &$0$  &$0$	        &	 $0$	 	& \dots		    & $0$		    & $0$			& $0$	\\
$0$        &$0$  &$0$	        &	 $H^{n-1}$	& \dots		 	& $0$		 	& $0$			& $0$	\\
\vdots     &\vdots     &\vdots	    &	 \vdots	    & \vdots		& \vdots		& 	\vdots		& \vdots	\\
$0$        &$0$        &$0$		    &	 $0$	 	& \dots	 	    & $0$		 	& $0$		& $0$	\\
$0$        &$0$        &$0$		    &	 $0$	 	& \dots	 	    & $0$		 	& $0$			& $0$	\\
$0$        &$0$        &$0$		    &	 $0$	 	& \dots	 	    & $0$		 	& $0$		& $0$	\\
\vdots     &\vdots     &\vdots	    &	 \vdots	    & \vdots		& \vdots		& 	\vdots		& \vdots	\\
$0$        &$0$        &$0$		    &	 $0$	 	& \dots		 	& $H^1$		 	& $0$			& $0$	\\
$0$        &$0$        &$0$		    &	 $0$	 	& \dots		 	& $0$		 	& $H^1$			& $0$	\\
$0$        &$0$        &$0$		    &	 $0$	 	& \dots		 	& $0$		 	& $0$			& $0$	\\

\hline
\hline
$\Ee_{2n+1}$ &$\Ee_{2n}$ &$\Ee_{2n-1}$& $\Ee_{2n-2}$ & \dots
 &$\Ee_2$& $\Ee_1$ &$\Ee_0$\\
 \hline
\end{tabular}
\end{center}
 
Notice that the vanishing hypothesis are:
$$h^1(\Vv(-1,-2))=h^i(\Aa(0,-1))=h^i(\Vv\otimes\Ee_2)=0,$$
$$h^2(\Vv(-2,-3))=h^i(\Aa(-1,-2))=h^i(\Vv\otimes\Ee_4)=0,$$
$$\vdots$$
$$h^{n-1}(\Vv(-n+1,-n))=h^i(\Aa(-n+2,-n+1))=h^i(\Vv\otimes\Ee_{2n-2})=0.$$

So we get the following table:
\begin{center}\begin{tabular}{|c|c|c|c|c|c|c|c|}
\hline
$\Ff_{2n+1}$ &$\Ff_{2n}$&$\Ff_{2n-1}$& $\Ff_{2n-2}$& \dots
&$\Ff_2$&$\Ff_1$ &$\Ff_0$\\
\hline
\hline

$0$        &$0$        &$0$         &	 $0$	 	& \dots		 	& $0$		 	& $0$			& $0$	\\
$0$        &$b$    &$0$		    &	 $0$	& \dots		 	& $0$		 	& $0$			& $0$	\\
$0$        &$0$  &$0$	        &	 $0$	 	& \dots		    & $0$		    & $0$			& $0$	\\
$0$        &$0$  &$0$	        &	 $0$	& \dots		 	& $0$		 	& $0$			& $0$	\\
\vdots     &\vdots     &\vdots	    &	 \vdots	    & \vdots		& \vdots		& 	\vdots		& \vdots	\\
$0$        &$0$        &$0$		    &	 $0$	 	& \dots	 	    & $0$		 	& $0$		& $0$	\\
$0$        &$0$        &$0$		    &	 $0$	 	& \dots	 	    & $0$		 	& $0$			& $0$	\\
$0$        &$0$        &$0$		    &	 $0$	 	& \dots	 	    & $0$		 	& $0$		& $0$	\\
\vdots     &\vdots     &\vdots	    &	 \vdots	    & \vdots		& \vdots		& 	\vdots		& \vdots	\\
$0$        &$0$        &$0$		    &	 $0$	 	& \dots		 	& $0$		 	& $0$			& $0$	\\
$0$        &$0$        &$0$		    &	 $0$	 	& \dots		 	& $0$		 	& $a$			& $0$	\\
$0$        &$0$        &$0$		    &	 $0$	 	& \dots		 	& $0$		 	& $0$			& $0$	\\

\hline
\hline
$\Ee_{2n+1}$ &$\Ee_{2n}$ &$\Ee_{2n-1}$& $\Ee_{2n-2}$ & \dots
 &$\Ee_2$& $\Ee_1$ &$\Ee_0$\\
 \hline
\end{tabular}
\end{center}
where $a:=h^1(\Aa(-1,0))=h^1(\Aa\otimes\Ee_1)$ and $b:=h^n(\Aa(-n+1,-n))=h^n(\Aa\otimes\Ee_{2n})$.

This yields to the desired extension.


\end{proof}

\begin{remark}\label{con}
For a rational normal scroll of dimension $n+1$ we need $n-1$ cohomolgical vanishing conditions in order to characterize primitive Ulrich bundles.\\
In particular for $n=1$ we get that any Ulrich bundle is primitive (see \cite{FM}).\\
For $n=2$ the primitive Ulrich bundles are characterized by just one cohomological condition. In particular if $c=3$, $S=\PP^2\times\PP^1$ there are arbitrary large families of ACM but only a finite number of ACM bundles which are not primitive Ulrich (see \cite{FMS}).
\end{remark}

\section{Open problems}\label{sec5}
So far we have considered projective varieties with Picard number two. Let us consider now the case $\ppp$.
Let $V_1, V_2, V_3$ be three $2$-dimensional vector spas with the coordinates $[x_{1i}], [x_{2j}], [x_{3k}]$ respectively with $i,j,k\in \{1,2\}$. Let $X\cong \mathbb P (V_1) \times \mathbb P (V_2) \times \mathbb P (V_3)$ and then it is embedded into $\mathbb P^7\cong \mathbb P(V)$ by the Segre map where $V=V_1 \otimes V_2 \otimes V_3$.

The intersection ring $A(X)$ is isomorphic to $A(\mathbb P^1) \otimes A(\mathbb P^1) \otimes A(\mathbb P^1)$ and so we have
$$A(X) \cong \mathbb Z[h_1, h_2, h_3]/(h_1^2, h_2^2, h_3^2).$$
We may identify $A^1(X)\cong \mathbb Z^{\oplus 3}$ by $a_1h_1+a_2h_2+a_3h_3 \mapsto (a_1, a_2, a_3)$. Similarly we have $A^2(X) \cong \mathbb Z^{\oplus 3}$ by $k_1e_1+k_2e_2+k_3e_3\mapsto (k_1, k_2, k_3)$ where $e_1=h_2h_3, e_2=h_1h_3, e_3=h_1h_2$ and $A^3(X) \cong \mathbb Z$ by $ch_1h_2h_3 \mapsto c$.
Then $X$ is embedded into $\mathbb P^7$ by the complete linear system $h=h_1+h_2+h_3$ as a subvariety of degree $6$ sin $h^3=6$.\\
We have six Ulrich line bundles namely $\Oo_X(2,1,0)$ up to permutations. Notice that $$Ext^1(\Oo_X(0,1,2),\Oo_X(2,1,0))\cong H^1(\Oo_X(2,0,-2))\cong\CC^3$$ and $$Ext^1(\Oo_X(1,0,2),\Oo_X(2,1,0))=H^1(\Oo_X(1,1,-2))=\CC^4$$ so we have two (up to permutations) families of rank two primitive Ulrich bundles arising from the extensions
\begin{equation}\label{q23}
0\to\Oo_X(2,1,0)\to\Vv\to\Oo_X(0,1,2)\to 0.\end{equation}
and \begin{equation}\label{p23}
0\to\Oo_X(2,1,0)\to\Vv\to\Oo_X(1,0,2)\to 0.\end{equation}

{\bf Question 1:} How many and what cohomological conditions are necessary to characterize primitive Ulrich bundles on $X$ or other varieties?

{\bf Question 2:} The number of cohomological conditions is always the same for each family of primitive Ulrich bundles on $X$ or other varieties?

In \cite{CFM2} it has been proved that the moduli space of rank two Ulrich bundles $\mathcal M(h_1+2h_2+3h_3, 4h_2h_3 + h_1h_3 + 2h_1h_2)$ is a single point, representing the equivalence class of all the strictly semistable bundles with such a $c_1$ from (\ref{q23})
and the moduli space $\mathcal M(h_1+2h_2+3h_3, 2h_2h_3 + 2h_1h_3 + 4h_1h_2)$ is generically smooth and rational of dimension 5: its general point corresponds to a stable bundle and it also contains exactly one point representing the equivalence class of all the strictly semistable bundles with such a $c_1$ from (\ref{p23}).

{\bf Question 3:} Which moduli spaces of Ulrich bundles are made up completely of primitive Ulrich bundles and which only partially on $X$ or other varieties?

So far we have considered the two Del Pezzo threefold of degree $6$, the remaining case is the del Pezzo threefold $Y$ of degree $d = 7$.  Rank two ACM bundles on $Y$ are classified in \cite{CFM3} and it is showed that there are not Ulrich line bundles. So on $Y$  no primitive Ulrich bundle can exist. An interesting well known open problem is the following: which is the lowest rank $\delta$ of an indecomposable Ulrich sheaf on a given projective variety? In the case of smooth hypersurfaces $X \subset \mathbb P^N$ Buchweitz, Greuel and Schreyer conjectured (see \cite{BGS}) that the minimal rank $\delta$  of an indecomposable
Ulrich bundle should be at least $2^{\lfloor\frac{n-2}{2}\rfloor}$. True for $\mathcal Q_n$. So for the cases where such a $\delta$ is known we give the following definition:

\begin{definition}
A vector bundle $E$ over a smooth projective variety $X$ is said $\delta$-primitive Ulrich  bundle if it is an Ulrich bundle which is extension of direct sums of Ulrich rank $\delta$ bundles. So $E$ is a $\delta$-primitive Ulrich bundles if there exist $A=\oplus_{i=1}^s U_i$ and $B=\oplus_{j=1}^z U'_j$,  with $U_i,U'_j$  Ulrich rank $\delta$ bundles, such that $E$ arises from the following exact sequence
$$0\to A\to E\to B\to 0.$$

\end{definition}

\begin{remark}\label{con}
On $\mathcal Q_n$ all the Ulrich bundles are $\delta$-primitive.
\end{remark}

{\bf Question 4:} How many and what cohomological conditions are necessary to characterize $\delta$-primitive Ulrich bundles on a smooth projective variety?


\providecommand{\bysame}{\leavevmode\hbox to3em{\hrulefill}\thinspa}
\providecommand{\MR}{\relax\ifhmode\unskip\spa\fi MR }
\providecommand{\MRhref}[2]{%
  \href{http://www.ams.org/mathscinet-getitem?mr=#1}{#2}
}
\providecommand{\href}[2]{#2}

\end{document}